\documentclass[12pt]{article}

\usepackage[english]{babel}
\usepackage{amssymb,amsmath}
\newtheorem{theorem}{Theorem }[section]
\newtheorem{lemma}[theorem]{Lemma}

\newtheorem{corollary}[theorem]{Corollary}

\newcommand{\proof}{\noindent\textbf{Proof. }}
\newcommand{\qed}{\hspace*{\fill}$\Box$}

\begin{document}

%In this version, we use the result from the paper ``On binary Kloosterman sums divisible by 3" written by Kseniya Garaschuk and Petr Lisonek.

\title{ Relative Difference Sets from Almost Perfect Nonlinear Functions}
\author{Zeying Wang\footnote{Department of Mathematics and Statistics, American University, Washington DC, USA; Email: zwang@american.edu} \\ \small{American University}}
\date{}
\maketitle

\begin{abstract}
In this paper we explore a connection between certain Almost Perfect Nonlinear Functions (APN functions) and relative difference sets. In particular, we show that the image set of certain 2-to-1 APN functions is a relative difference set. Through a result of Pott this further provides a connection between APN functions and bent functions.

\end{abstract}

{\bf Keywords:} APN function, relative difference set, bent function

\section{Introduction}

%$F : V_m\mapto V_m$ be a S-box.
 %The derivative of F with respect to $a \in V_m$
 % is
%the function DaF : Vm → Vm is defined as
%DaF (x) = F (x) + F (x + a), ∀x ∈ Vm.
%δ is an integer valued function from Vm × Vm is defined as
%δ(a, b) = |{x ∈ Vm, DaF (x) = b}| for a, b ∈ Vm.
%Abusing the notation δ, we define
%δ(F ) = max
%a̸=0,b∈Vm
%δ(a, b).
%δ(F ) needs to be as low as possible to resist diﬀerential attacks on block ciphers [7]. Since
%DaF (x) = DaF (x + a) for a ̸= 0 ∈ Vm, we have δ(F ) ≥ 2. The S-boxes for which the
%equality holds are the best choices against the diﬀerential attack.
%Definition 2 An S-box F : Vm → Vm is called Almost Perfect Nonlinear (in short, APN)
%if δ(F ) = 2.

{\bf Definition:} Let $p$ be a prime, $q=p^n$, and  $f$ be a map from $ \mathbb{F}_q$ to $ \mathbb{F}_q$. For any $a$, $b\in \mathbb{F}_q$, let
$$\delta_f(a,b):=|\{x \in \mathbb{F}_q\;:\: f(x+a)-f(x)=b\}|.$$
If $ \delta_f(a,b) \leq d$ for all $a \neq 0$ and $b\in \mathbb{F}_q$, then $f$ is called a {\it differentially $d$-uniform function} (abbreviated {\it $d$-uniform function}).  

\medskip

A 1-uniform map $f: \mathbb{F}_q\to\mathbb{F}_q$ is called {\it planar}, that is, $f$ is planar iff $f(x+a)-f(x)$ is a permutation for any $a \in \mathbb{F}_q^*$. Planar maps exists if and only if $q$ is odd.

A map $f$ is called {\it  almost perfect nonlinear (APN)} if $f$ is 2-uniform, that is,  if for every $a \neq 0$ and every $b \in \mathbb{F}$, the equation
$$f(x+a)-f(x)=b$$ 

admits at most 2 solutions.

\medskip

When $q$ is even, the equation $f(x+a)+f(x)=b$ has always an even number of solutions, since if $x$ is a solution, then obviously $x+a$ is also a solution.

\medskip

 Because there are no 1-uniform maps when $q$ is even, the APN maps have the smallest possible uniformity over binary fields.

APN functions over a finite field of characteristic 2 were introduced in \cite{Nyberg}  by K. Nyberg in 1993  and have been widely studied.

  APN maps and more
generally maps in characteristic 2 with low uniformity play an important role in
cryptography, mainly because they provide good resistance to so-called differential attacks when used
as an S(Substitution)-box of a block cipher.

\medskip

Below we mention some monomial APN functions over $\mathbb{F}_{2^n}$:

\begin{itemize}
\item Gold function: $x^d$, where $d=2^i+1$, and $\gcd(i, n)=1$, $1 \le i \le \frac{n-1}{2}$.

\item Kasami function: $x^d$, where $d=2^{2i}-2^i+1$, and $\gcd(i, n)=1$, $1 \le i \le \frac{n-1}{2}$.

\item Welch function: $x^d$, where $d=2^k+3$, and $n=2k+1$.

\end{itemize}

Some  important APN functions that are not monomials include

\begin{itemize}
\item the function $x^3+Tr(x^9)$  over $\mathbb{F}_{2^n}$ for any $n$.

\item  the function $x^{2^i+1}+(x+x^{2^m})^{2^i+1}$ over $\mathbb{F}_{2^n}$ when $n=2m$, $m$ is even and $\gcd(n,i)=1$.
\end{itemize}

In \cite{Kolsch}, K\"olsch et al. considered image sets of differentially $d$-uniform maps over finite fields and showed that APN maps with minimal image size are very close to being 3-to-1. In particularly,  they either mentioned or explicitly proved that certain families of APN functions over $\mathbb{F}_{2^n}$, with $n$ odd, are 2-to-1 functions.  Those 2-to-1 APN functions are of particular interest to us, because, in combination with a prior result of the author of the current paper (see \cite{WQWX}) related to 2-to-1 planar functions, they inspired us to investigate the image set of these 2-to-1 APN functions. We cite this result from  \cite{WQWX} below:

\begin{theorem}\label{WQWX} {\cite{WQWX}}

Let $G$ and $H$ be two finite groups of the same order $v$. Let $f: G\to H$ be a 2-to-1 planar function and $D=f(G)\setminus\{1_H\}$. 
\begin{itemize}
\item[(1)] if $v\equiv 3 \pmod 4$, then $D$ is a skew Hadamard difference set in $H$.
\item[(2)] If $v\equiv 1 \pmod 4$, then $D$ is a $(v, (v-1)/2, (v-5)/4, (v-1)/4)$ partial difference set in $H$.
 \end{itemize}
 \end{theorem}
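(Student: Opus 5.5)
Write $G$ and $H$ multiplicatively, with $|G|=|H|=v$. Planarity means that for every $a\neq 1_G$ the map $x\mapsto f(ax)f(x)^{-1}$ is a bijection $G\to H$. I will work in the group ring $\mathbb{Z}[H]$ and compute $D D^{(-1)}$ directly.

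\textbf{Step 1: structure of $f$.} A planar function cannot exist with $v$ even, since $x\mapsto f(ax)f(x)^{-1}$ hits $1_H$ exactly once for an involution $a$, while the solutions pair up as $x\leftrightarrow ax$. So $v$ is odd. Saying $f$ is 2-to-1 then means one value has exactly one preimage and the others have exactly two; after translating $f$ on the right by a constant of $H$ (which preserves planarity), we may assume the single-preimage value is $1_H$. In the group ring, with $F=\sum_{x\in G} f(x)$,
\[
F=1_H+2D,\qquad |D|=\frac{v-1}{2}.
\]

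\textbf{Step 2: the group ring identity.} Compute
\[
FF^{(-1)}=\sum_{x,y\in G} f(x)f(y)^{-1}.
\]
Put $x=ay$. The term $a=1$ contributes $v\cdot 1_H$. For each $a\neq 1$, planarity makes $y\mapsto f(ay)f(y)^{-1}$ run over $H$ exactly once, so
\[
FF^{(-1)}=v\cdot 1_H+(v-1)H.
\]
Expanding the left side gives
\[
1+2D+2D^{(-1)}+4DD^{(-1)}=v+(v-1)H,
\]
and hence
\[
4DD^{(-1)}=(v-1)+(v-1)H-2(D+D^{(-1)}).
\]

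\textbf{Step 3: the key obstacle, namely the relation between $D$ and $D^{(-1)}$.}
\begin{itemize}
\item Since $FF^{(-1)}$ has nonnegative coefficients, every coefficient of $-2(D+D^{(-1)})+(v-1)H$ at an element $g\neq 1$ must be divisible by $4$.
\item The coefficient of $D+D^{(-1)}$ at $g\neq 1$ is $0$, $1$ or $2$.
\item If $v\equiv 3\pmod 4$, then $v-1\equiv 2\pmod 4$, which forces every coefficient to equal $1$. So $D\cap D^{(-1)}=\emptyset$, $D+D^{(-1)}=H-1$, and the skew condition holds.
\item If $v\equiv 1\pmod 4$, then $v-1\equiv 0\pmod 4$, which forces coefficients $0$ or $2$, i.e.\ $D=D^{(-1)}$. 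Since $1\notin D$ and $|D|=(v-1)/2$, we get $D\cup D^{(-1)}\subseteq H\setminus\{1\}$ with the same counting.
\end{itemize}
This parity argument is the crux, and it works because $F=1+2D$ has the factor $2$.

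\textbf{Step 4: conclude.}
\begin{itemize}
\item In case (1), substituting $D+D^{(-1)}=H-1$ gives
\[
4DD^{(-1)}=(v+1)+(v-3)H,
\]
so $DD^{(-1)}=\frac{v+1}{4}+\frac{v-3}{4}H$. This is a difference set with $\lambda=(v-3)/4$, hence skew Hadamard.
\item In case (2), $D=D^{(-1)}$ and
\[
4D^2=(v-1)+(v-1)H-4D,
\]
so
\[
D^2=\frac{v-1}{4}\cdot 1+\frac{v-1}{4}(H-1-D)+\Bigl(\frac{v-1}{4}-1\Bigr)D.
\]
This gives $\lambda=(v-5)/4$ and $\mu=(v-1)/4$, as claimed.
\end{itemize}
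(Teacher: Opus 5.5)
The paper gives no proof of this statement; it is quoted from \cite{WQWX}, so there is no in-paper argument to compare against. Your group-ring proof is correct in substance. You write $F=1+2D$, use planarity to get $FF^{(-1)}=v+(v-1)H$, and then read $4DD^{(-1)}=(v-1)+(v-1)H-2(D+D^{(-1)})$ coefficientwise modulo $4$ at each $g\neq 1$. This forces $D+D^{(-1)}=H-1$ when $v\equiv 3\pmod 4$ and $D=D^{(-1)}$ when $v\equiv 1\pmod 4$, after which the parameters follow. That mod-$4$ step is the heart of the matter, and your argument works unchanged for nonabelian $G,H$.

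Three things need tidying.

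\textbf{The normalization in Step 1 is not ``without loss of generality''.} $D$ is defined from the given $f$, and right-translating $f$ changes $D$. The statement is in fact false without this normalization. The map $x\mapsto x^2+1$ on $\mathbb{F}_3$ is planar and takes the value $1$ once and $2$ twice. Then $f(G)\setminus\{0\}=\{1,2\}$, which is not skew Hadamard. So $|f^{-1}(1_H)|=1$ has to be read as part of the hypothesis ``2-to-1''. Once it is, no translation is needed, and $v=1+2|D|$ is odd automatically; your involution argument is correct but superfluous.

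\textbf{Step 3.} The divisibility of the coefficients by $4$ comes from $DD^{(-1)}\in\mathbb{Z}[H]$, not from nonnegativity of $FF^{(-1)}$.

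\textbf{Case (2) of Step 4.} The display has the wrong constant term. From $D^2=\frac{v-1}{4}+\frac{v-1}{4}H-D$ one gets
\[
D^2=\frac{v-1}{2}\cdot 1+\frac{v-5}{4}\,D+\frac{v-1}{4}\,(H-1-D).
\]
Your expansion of $\frac{v-1}{4}H$ dropped its identity component; the coefficient of $1$ in $DD^{(-1)}$ must be $|D|=\frac{v-1}{2}$. The values $\lambda=(v-5)/4$ and $\mu=(v-1)/4$ are unaffected.
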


Given this result and the existence of 2-to-1 APN functions as shown in \cite{Kolsch} we believed it was now natural to wonder if 2-to-1 APN functions could have image sets with interesting properties. Before investigating these image sets in the next section we first provide some further definitions.
\medskip

{\bf Definition:} Let $G$ be a finite  group of order $mn$ containing a normal subgroup $N$ of order $n$. A $k$-subset $R$ of $G$ is called a {\it relative ($m$, $n$, $k$, $\lambda$) difference set} (relative to $N$) if the list of differences $r-r'$  with $r$, $r'\in R$ and $r\neq r'$ covers every element in $G\setminus{N}$ exactly $\lambda$ times and no element in $N\setminus\{0\}$ is covered. The subgroup $N$ is called the ``{\it forbidden subgroup}".\\

The differential uniformity, and thus the APN property, is preserved by some transformations of functions, which define equivalence relations between vectorial Boolean functions. Two of these equivalence notions are the extended affine equivalence (EA-equivalence) and Carlet-Charpin-Zinoviev equivalence (CCZ-equivalence). EA-equivalence is a particular case of CCZ-equivalence, which is the most general known equivalence relation preserving the differential uniformity

\medskip

{\bf Definition:} Two vectorial Boolean functions $F$  and $G$ over $\mathbb{F}_{2^n}$ are {\it Extended Affine (EA) equivalent} iff there exist two affine permutations $A$,  $B$, and an affine function $C$ such that $G=A\circ F\circ B+C$.

\medskip

As we will not need the general notion of CCZ-equivalence we will restrict ourselves to the special case for power functions that we will use in the next section.
\medskip

{\bf Definition:} For $k$ coprime with $2^n-1$ let $x^k$ and $x^l$ be two APN functions over $\mathbb{F}_{2^n}$.  The two APN functions  $x^k$ and $x^l$ over $\mathbb{F}_{2^n}$ are {\it CCZ-equivalent} iff there exists an integer $0\le a <n$ such
that $l \equiv k 2^a \pmod {2^{n}-1}$ or $kl\equiv 2^a \pmod {2^n-1}$

\medskip

In the above case CCZ-equivalence is actually the same as EA-equivalence and also the same as what is known as cyclotomic equivalence. We refer the reader to both \cite{Dempwolff} and \cite{Yoshiara} for more details.
%\noindent{\bf Corollary} (Pott, Nonlinear function and RDS 2004)

%A function $f$: $K \rightarrow N$ with elementary abelian $2$-group $K$ and $N$ of size $2^{2t+1}$ each is almost perfect nonlinear if it is maximal nonlinear. (The converse is not true.)

%In the next section we will show that the image set of three families of 2-to-1 APN functions from $\mathbb{F}_{2^n}$ to $\mathbb{F}_{2^n}$ are relative difference sets in $(\mathbb{F}_{2^{n}},+)$.

\section{Main Results}

In this section we will prove that the image set of three families of 2-to-1 APN functions from $\mathbb{F}_{2^n}$ to $\mathbb{F}_{2^n}$ are relative difference sets in $(\mathbb{F}_{2^{n}},+)$.

\subsection{The APN map $f(x)= x^3+a^{-1}\mathrm{Tr}(a^3x^9)$ }

In this subsection we will show that two 2-to-1 APN functions related to the map $x \to x+a^{-1}\mathrm{Tr}(a^3x^3)$ give rise to relative difference sets. We will need the following result from \cite{Kolsch}:

\begin{lemma}\label{image_size}
Let  $a$ be a non-zero element in $\mathbb{F}_{2^n}$. If $n$ is odd, then the map $x \to x+a^{-1}\mathrm{Tr}(a^3x^3)$ is a 2-to-1 function. 
\end{lemma}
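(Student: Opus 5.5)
The plan is to determine directly, for $g(x)=x+a^{-1}\mathrm{Tr}(a^3x^3)$, when two distinct inputs can have the same image. Then I will show that every input has exactly one partner. The key observation is that $g$ differs from the identity only by a term in the set $\{0,a^{-1}\}$, because $\mathrm{Tr}$ takes values in $\mathbb{F}_2$. This will give $2$-to-$1$ directly, with no counting argument needed.

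\textbf{Step 1 (restricting the possible collisions).} Suppose $g(x)=g(y)$ with $x\neq y$. Then
$$x+y=a^{-1}\bigl(\mathrm{Tr}(a^3x^3)+\mathrm{Tr}(a^3y^3)\bigr)\in\{0,a^{-1}\}.$$
Since $x\neq y$, we must have $y=x+a^{-1}$, together with the condition $\mathrm{Tr}(a^3x^3)+\mathrm{Tr}(a^3(x+a^{-1})^3)=1$. Conversely, if this trace condition holds for $y=x+a^{-1}$, then $g(x)=g(y)$. So every fibre of $g$ has size at most $2$. The only possible partner of $x$ is $x+a^{-1}$, and it is a partner exactly when the trace condition holds.

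\textbf{Step 2 (the trace condition always holds).} Expand in characteristic $2$:
$$a^3(x+a^{-1})^3=a^3x^3+a^2x^2+ax+1.$$
Hence the left side of the condition equals $\mathrm{Tr}(a^2x^2)+\mathrm{Tr}(ax)+\mathrm{Tr}(1)$. The trace is invariant under the Frobenius map, so $\mathrm{Tr}(a^2x^2)=\mathrm{Tr}((ax)^2)=\mathrm{Tr}(ax)$, and these two terms cancel. What remains is $\mathrm{Tr}(1)=n \bmod 2$, which equals $1$ because $n$ is odd. Therefore the condition holds for every $x\in\mathbb{F}_{2^n}$.

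\textbf{Conclusion.} Every $x$ satisfies $g(x)=g(x+a^{-1})$ with $x+a^{-1}\neq x$, and no other point maps to $g(x)$. So $g$ is exactly $2$-to-$1$, with fibres $\{x,x+a^{-1}\}$.

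There is no real obstacle here. The only thing to get right is the cube expansion and the Frobenius-invariance cancellation in Step 2; the hypothesis that $n$ is odd is used precisely to get $\mathrm{Tr}(1)=1$.
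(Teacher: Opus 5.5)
Your proof is correct and complete: every fibre of $x\mapsto x+a^{-1}\mathrm{Tr}(a^3x^3)$ is exactly $\{x,x+a^{-1}\}$, and oddness of $n$ enters only through $\mathrm{Tr}(1)=1$. The paper does not prove this lemma itself (it quotes it from K\"olsch et al.), but your key identity $\mathrm{Tr}\bigl((ax+1)^3+(ax)^3\bigr)=\mathrm{Tr}(1)=1$ is the same computation the paper uses in Case~1 of the proof of Theorem~\ref{main}, and your explicit fibres are exactly what the paper asserts without proof in Section~\ref{Bent}, namely $h(x)=h(y)$ if and only if $x=y$ or $x+y=a^{-1}$.
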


\begin{theorem}\label{main}
Let $n=2k+1$ be odd and $a \in \mathbb{F}_{2^n}^*$. Then the image set of the function $f(x)= x+a^{-1}\mathrm{Tr}(a^3x^3)$ is a {\rm (}$2^{2k}$,  $2$,  $2^{2k}$,  $2^{2k-1}${\rm )} relative difference set in $(\mathbb{F}_{2^{n}},+)$ relative to the subgroup $N=\{a^{-1},0\}$
\end{theorem}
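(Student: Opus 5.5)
The plan is to reduce to $a=1$, identify the image set explicitly as a transversal of $N$, and then verify the difference-set property with additive characters. The character sums that arise are Walsh coefficients of the Gold function $x^3$.

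\emph{Reduction to $a=1$.} Write $f(x)=a^{-1}\bigl(ax+\mathrm{Tr}((ax)^3)\bigr)=a^{-1}g(ax)$ with $g(y)=y+\mathrm{Tr}(y^3)$. Hence $f(\mathbb{F}_{2^n})=a^{-1}g(\mathbb{F}_{2^n})$. Multiplication by $a^{-1}$ is an automorphism of $(\mathbb{F}_{2^n},+)$ sending $\{0,1\}$ to $N=\{0,a^{-1}\}$. It therefore suffices to show that $R=g(\mathbb{F}_{2^n})$ is a relative difference set with the stated parameters relative to $\{0,1\}$.

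\emph{Describing $R$.} Since $n$ is odd, $\mathrm{Tr}(1)=1$, and $\mathrm{Tr}(y^2)=\mathrm{Tr}(y)$. Therefore
$$\mathrm{Tr}((y+1)^3)=\mathrm{Tr}(y^3)+\mathrm{Tr}(y^2)+\mathrm{Tr}(y)+\mathrm{Tr}(1)=\mathrm{Tr}(y^3)+1.$$
Two consequences follow. First, $g(y+1)=g(y)$, which recovers Lemma~\ref{image_size}, so $|R|=2^{2k}$. Second, each coset $\{u,u+1\}$ of $\{0,1\}$ contains exactly one element $u$ with $\mathrm{Tr}(u^3)=0$, and for that element $g(u)=g(u+1)=u$. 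Hence
$$R=\{u\in\mathbb{F}_{2^n}:\mathrm{Tr}(u^3)=0\},$$
and $R$ is a transversal of $\{0,1\}$.

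\emph{Character criterion.} For a $k$-subset $R$ of an abelian group $G$ of order $mn$ with $k=m$, Fourier inversion on the group ring equation $RR^{(-1)}=k+\lambda(G-N)$ gives the following. $R$ is an $(m,n,m,m/n)$ relative difference set relative to $N$ if and only if, for every nontrivial character $\chi$, $\chi(R)=0$ when $\chi$ is trivial on $N$, and $|\chi(R)|^2=k$ when $\chi$ is nontrivial on $N$. I will either quote this or check it directly by inversion.

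The characters are $\chi_b(u)=(-1)^{\mathrm{Tr}(bu)}$, and $\chi_b$ is trivial on $\{0,1\}$ iff $\mathrm{Tr}(b)=0$. In that case $\chi_b(R)=0$ for $b\neq 0$, because $R$ is a transversal: the sum reduces to a nontrivial character sum over $G/N$. For $\mathrm{Tr}(b)=1$, write the indicator of $R$ as $\tfrac12\bigl(1+(-1)^{\mathrm{Tr}(u^3)}\bigr)$. Then
$$\chi_b(R)=\tfrac12\sum_u(-1)^{\mathrm{Tr}(bu)}+\tfrac12 W(b)=\tfrac12 W(b),\qquad W(b)=\sum_u(-1)^{\mathrm{Tr}(u^3+bu)}.$$

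\emph{Evaluating $W(b)$.} This is the step needing real care. $Q(u)=\mathrm{Tr}(u^3)$ is a quadratic form with associated bilinear form
$$B(u,v)=\mathrm{Tr}(u^2v+uv^2)=\mathrm{Tr}\bigl(v(u^2+u^{2^{n-1}})\bigr).$$
Its radical consists of the $u$ with $u^4=u$, i.e.\ $u\in\mathbb{F}_{4}\cap\mathbb{F}_{2^n}=\{0,1\}$ since $n$ is odd. By the standard theory of quadratic forms, $W(b)=0$ unless $Q(u)+\mathrm{Tr}(bu)$ vanishes on the radical, and otherwise $|W(b)|^2=2^n\cdot|\mathrm{rad}|=2^{n+1}$. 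The vanishing condition at $u=1$ is $\mathrm{Tr}(1)+\mathrm{Tr}(b)=0$, i.e.\ $\mathrm{Tr}(b)=1$, exactly our case.

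Thus $|\chi_b(R)|^2=\tfrac14\cdot 2^{2k+2}=2^{2k}=k$, and the criterion yields the $(2^{2k},2,2^{2k},2^{2k-1})$ relative difference set. The main obstacle is this Walsh computation. I would handle it by squaring $W(b)$ and summing over $v$ to get $2^n\sum_{u\in\mathrm{rad}}(-1)^{Q(u)+\mathrm{Tr}(bu)}$, which makes the evaluation self-contained.
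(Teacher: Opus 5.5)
Your proof is correct, and it takes a genuinely different route from the paper's.

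\textbf{The paper's route.} The paper never describes the image set. It works with general $a$ and rules out the difference $a^{-1}$ by splitting on the value of $\mathrm{Tr}(a^3(x^3-y^3))$. For $b\notin\{0,a^{-1}\}$ it counts all pairs $(x,y)$ with $f(x)-f(y)=b$, again splitting on that trace bit. After substituting $z=ay$ and $c=ab$ (resp.\ $d=ab-1$), each case reduces to counting $z$ with $\mathrm{Tr}((c^4+c)z^2+c^3)$ equal to a fixed bit. That count is $2^{n-1}$, because $z\mapsto(c^4+c)z^2+c^3$ is a bijection for $c\neq 0,1$. The total $2^n$ is then divided by $4$ using Lemma~\ref{image_size}. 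The argument is fully elementary, but it leans on the cited 2-to-1 result and says nothing about what $D$ looks like.

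\textbf{Your route.} You prove the 2-to-1 property yourself and identify $D=\{x:\mathrm{Tr}(a^3x^3)=0\}$ as a transversal of $N$. You then reduce the claim, via the character criterion for semiregular relative difference sets, to the Walsh spectrum of the quadratic form $Q(u)=\mathrm{Tr}(u^3)$. Its radical is exactly $\{0,1\}$, with $Q(1)=1$.

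\textbf{Comparison.} Both proofs rest on the same fact: $Q(z+c)+Q(z)=B(z,c)+Q(c)$ is a nonconstant affine function of $z$ exactly when $c\notin\{0,1\}$. The paper uses it difference by difference; you use it once, globally, when computing $W(b)^2$. Your route gives more structure. It works verbatim for any quadratic form on $\mathbb{F}_2^{2k+1}$ with one-dimensional radical $\langle r\rangle$ and $Q(r)=1$. It also makes the result of Section~\ref{Bent} essentially immediate, since the bent function attached to $D$ is just $\mathrm{Tr}(a^3x^3)$ restricted to a complement of $N$.

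A cosmetic point: you use $k$ both for $|R|$ and in $n=2k+1$, so ``$2^{2k}=k$'' should be reworded.
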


\proof Let $D$ be the image set of $f(x)$. That is,
$$D=\{x+a^{-1}\mathrm{Tr}(a^3x^3), x \in \mathbb{F}_{2^n}\}.$$

We first show that $a^{-1}$ cannot be represented by any of the  differences $x_1-y_1$, where $x_1$, $y_1\in D$.  Assume by way of contradiction that $a^{-1}=x+a^{-1}\mathrm{Tr}(a^3x^3)-(y+a^{-1}\mathrm{Tr}(a^3y^3))$, where $x\neq y$ and $x$, $y\in \mathbb{F}_{2^n}$. By combining similar terms, we get
\begin{eqnarray}
x-y=-a^{-1}\left(\mathrm {Tr}(a^3x^3-a^3y^3)\right)+a^{-1}. \label{eq_general}
\end{eqnarray}
\begin{itemize}
\item[Case 1:] If $\mathrm{Tr}(a^3x^3-a^3y^3)=0$, then by  Equation (\ref{eq_general}) we have $x-y=a^{-1}$. It follows that $x=a^{-1}+y$, thus $ax=1+ay$. Substituting $1+ay$ for $ax$ in $\mathrm{Tr}(a^3x^3-a^3y^3)=0$ gives
$$ 0=\mathrm{Tr}((1+ay)^3-a^3y^3)=\mathrm{Tr}(1+ay+a^2y^2)=\mathrm{Tr}(1)=1,$$
a contradiction. Here we used that $\mathrm{Tr}(ay)+\mathrm{Tr}(a^2y^2)=0$ and that $\mathrm{Tr}(1)=1$ because $n$ is odd.

%\begin{eqnarray}
%\nonumber\mathrm{Tr}((1+ay)^3-a^3y^3)&=&0\\
%\nonumber\mathrm{Tr}(1+ay+a^2y^2)&=&0\\
%\mathrm{Tr}(ay+a^2y^2)&=&-\mathrm{Tr}(1)=1.\label{trace_eq1} 
%\end{eqnarray}
%But since $a^2y^2=(ay)^2$, by the property of trace function, $\mathrm{Tr}(a^2y^2)=\mathrm{Tr}(ay)$, thus $\mathrm{Tr}(ay+a^2y^2)=0$, contradicting with Equation (\ref{trace_eq1}).

\item[Case 2:] If $\mathrm{Tr}(a^3x^3-a^3y^3)=1$, then by Equation (\ref{eq_general}) we have $x-y=0$, contradicting with the assumption that $x \neq y$. 
\end{itemize}

\bigskip

Next let $b$ be a nonzero element in $\mathbb{F}_{2^n}$ with $b\neq a^{-1}$. We count the number of pairs $(x_1,y_1)$ with $x_1$,  $y_1\in D$ such that $b=x_1-y_1$. \\

 We  first count the number of  pairs $(x,y)$ that satisfy the equation
\begin{eqnarray}
x+a^{-1}\mathrm{Tr}(a^3x^3)-(y+a^{-1}\mathrm{Tr}(a^3y^3))=b.\label{difference_eq1}
\end{eqnarray}

%Next we will show that if $b \neq a^{-1}$, then Equation (\ref{difference_eq1}) has exactly $2^{n}$ solutions. 

Clearly Equation (\ref{difference_eq1}) is equivalent to 
\begin{eqnarray}
(x-y)+a^{-1}(\mathrm{Tr}(a^3(x^3-y^3)))&=&b \label{difference_eq2}\\
\mbox{and hence } \quad\quad a(x-y)+\mathrm{Tr}(a^3(x^3-y^3))&=&ab \label{difference_eq3}
\end{eqnarray}
Similar to what we have done for $b=a^{-1}$, we will discuss the number of solutions of Equation (\ref{difference_eq3}) in two cases based on the values of the trace function:
\begin{itemize}
\item[Case 1:] If $\mathrm{Tr}(a^3(x^3-y^3))=0$, then from Equation (\ref{difference_eq3}) we have $a(x-y)=ab$, that is, $ax=ay+ab$.  Let  $c=ab$ and $z=ay$. Clearly $c \neq 0$ and $c\neq 1$ because $b\neq a^{-1}$. Then $ax=z+c$ and  
\begin{eqnarray}
\nonumber a^3x^3-a^3y^3=(z+c)^3-z^3=cz^2+c^2z+c^3
\end{eqnarray}
and $\mathrm{Tr}(a^3(x^3-y^3))=0$ becomes
\begin{equation}
\mathrm{Tr}(cz^2+c^2z+c^3)=0\label{general_eq1}
\end{equation}
Since $\mathrm{Tr}(c^2z)=\mathrm{Tr}(c^4z^2)$, Equation (\ref{general_eq1}) is equivalent to 
\begin{equation}
\mathrm{Tr}((c^4+c)z^2+c^3)=0.\label{derived_eq1}
\end{equation} 
Recall there are exactly $2^{n-1}$ solutions to $\mathrm{Tr}(t)=0$. If $r$ is a solution to $\mathrm{Tr}(t)=0$, then 
\begin{eqnarray*}
(c^4+c)z^2+c^3&=&r\\
z^2&=&(r-c^3)(c^4+c)^{-1} \quad \mbox{if $c^4+c\neq 0$ }\\
z&=&(r-c^3)^{2^{n-1}}(c^4+c)^{-2^{n-1}}.
\end{eqnarray*}
Since $c \neq 0, 1$ and $\gcd (3, 2^n-1)=1$, we have $c^4+c\neq 0$.  Because $\gcd(2^{n-1}, 2^n-1)=1$, for each solution $r$, there is a unique solution $z$ of Equation (\ref{derived_eq1}) corresponding to it.  Also for each solution $z$, there is a unique pair of solution $(x,y)$ of Equation (\ref{difference_eq1}) corresponding to it ($y=a^{-1}z$ and $x=y+b$).

Hence  there are exactly $2^{n-1}$ pairs of  solutions $(x,y)$  satisfying Equation (\ref{difference_eq1}) and $\mathrm{Tr}(a^3(x^3-y^3))=0$.

\item[Case 2:] If $\mathrm{Tr}(a^3(x^3-y^3))=1$, then from Equation (\ref{difference_eq3}) we have $a(x-y)=ab-1$, that is, $ax=ay+(ab-1)$.  Let  $d=ab-1$ and $z=ay$. Clearly $d\neq 0$ since $b\neq a^{-1}$,  and $d\neq 1$ since both $a$ and $b$ are nonzero elements. Then $ax=z+d$ and  
\begin{eqnarray}
\nonumber a^3x^3-a^3y^3=(z+d)^3-z^3=dz^2+d^2z+d^3
\end{eqnarray}
and $\mathrm{Tr}(a^3(x^3-y^3))=1$ becomes
\begin{equation}
\mathrm{Tr}(dz^2+d^2z+d^3)=1\label{general_eq2}
\end{equation}
Using similar argument as in Case 1, we can show that there are exactly $2^{n-1}$ pairs of solutions to Equation (\ref{difference_eq1}) and $\mathrm{Tr}(a^3(x^3-y^3))=1$.\\

\end{itemize}

Combining the results of Case 1 and 2, there are $2^{n-1}+2^{n-1}=2^n$ solutions to Equation (\ref{difference_eq1}).  By Lemma \ref{image_size}, there are $2^{n}/4=2^{2k+1-2}=2^{2k-1}$ different representations of $b$ with $b=x_1-y_1$, $x_1\neq y_1$ and $x_1,y_1 \in D$ whenever $b\neq a^{-1}$. 
\medskip

In conclusion, this shows that the set $D$ as defined at the beginning of the proof is a {\rm (}$2^{2k}$,  $2$,  $2^{2k}$,  $2^{2k-1}${\rm )} relative difference set in $(\mathbb{F}_{2^{n}},+)$ relative to the subgroup $N=\{a^{-1},0\}$.

\qed

\medskip

For any permutation function $g(x)$ of the finite field $\mathbb{F}_{2^n}$, $(f\circ g)(x)=f(g(x))$  has the same image set as $f(x)$. Thus we have the following corollary:

\begin{corollary}\label{permutation_general}
Let $n=2k+1$ be odd and $a \in \mathbb{F}_{2^n}$ non-zero, and $g(x)$ is a permutation function of $\mathbb{F}_{2^n}$. Then the function $g(x)+a^{-1}\mathrm{Tr}(a^3 (g(x)^3)$ is a 2-to-1 function and the the image of the function  $g(x)+a^{-1}\mathrm{Tr}(a^3 (g(x)^3)$ is a {\rm (}$2^{2k}$,  $2$,  $2^{2k}$,  $2^{2k-1}${\rm )} relative difference set in $(\mathbb{F}_{2^{n}},+)$ relative to the subgroup $N=\{a^{-1},0\}$.
\end{corollary}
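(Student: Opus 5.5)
The corollary follows from Lemma \ref{image_size} and Theorem \ref{main} by composing with the permutation $g$. Write $f(x)=x+a^{-1}\mathrm{Tr}(a^3x^3)$ and $h(x)=g(x)+a^{-1}\mathrm{Tr}(a^3 g(x)^3)$. Then $h=f\circ g$ as maps $\mathbb{F}_{2^n}\to\mathbb{F}_{2^n}$. Two facts need checking: $h$ is 2-to-1, and $h$ has the same image set as $f$.

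For the 2-to-1 property, fix $w$ in the image of $h$. The preimage $h^{-1}(w)$ equals $g^{-1}(f^{-1}(w))$. Since $g$ is a bijection of $\mathbb{F}_{2^n}$, we have $|g^{-1}(S)|=|S|$ for every subset $S$. Hence $|h^{-1}(w)|=|f^{-1}(w)|$, and by Lemma \ref{image_size} (with $n$ odd and $a\neq 0$) this is $2$ for every $w$ in the image. So $h$ is 2-to-1.

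For the image set, $g$ is surjective, so $g(\mathbb{F}_{2^n})=\mathbb{F}_{2^n}$. Therefore
$$h(\mathbb{F}_{2^n})=f(g(\mathbb{F}_{2^n}))=f(\mathbb{F}_{2^n})=D,$$
where $D$ is the set studied in Theorem \ref{main}. That theorem says $D$ is a $(2^{2k},2,2^{2k},2^{2k-1})$ relative difference set in $(\mathbb{F}_{2^n},+)$ relative to $N=\{a^{-1},0\}$. Since the relative difference set property depends only on the subset $D$ and not on the function that parametrizes it, the image of $h$ is that same relative difference set.

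The argument has essentially no obstacle. The only point needing care is the bookkeeping that preimage sizes are preserved under precomposition with a bijection, which justifies the 2-to-1 claim. Theorem \ref{main} itself only needs the image set, through its count of $2^{n}/4$ representations, and that set is unchanged.
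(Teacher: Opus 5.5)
Your proposal is correct and follows the paper's argument: the paper simply observes that precomposing with a permutation $g$ leaves the image set of $f(x)=x+a^{-1}\mathrm{Tr}(a^3x^3)$ unchanged, and then invokes Theorem~\ref{main}. Your explicit check that $|h^{-1}(w)|=|f^{-1}(w)|=2$ spells out the 2-to-1 claim via Lemma~\ref{image_size}, which the paper leaves implicit.
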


When $n$ is odd, we have $\gcd(3,2^n-1)=1$,  and the monomial $x^3$ is a permutation function of $\mathbb{F}_{2^n}$.  As a result, the following result  follows from Corollary \ref{permutation_general} :

\begin{theorem}\label{APN_IEEE}
Let $n=2k+1$ be odd and $a \in \mathbb{F}_{2^n}$ non-zero. Then the image of the APN map $x \to x^3+a^{-1}\mathrm{Tr}(a^3x^9)$ is a {\rm (}$2^{2k}$,  $2$,  $2^{2k}$,  $2^{2k-1}${\rm )} relative difference set in $(\mathbb{F}_{2^{n}},+)$ relative to the subgroup $N=\{a^{-1},0\}$.
\end{theorem}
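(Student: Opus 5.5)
The plan is to obtain Theorem \ref{APN_IEEE} as a direct specialization of Corollary \ref{permutation_general}, taking $g(x)=x^3$. There are three things to do: verify the hypothesis on $g$, identify the composed function with the APN map in the statement, and read off the conclusion.

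\textbf{The permutation hypothesis.} Since $n=2k+1$ is odd, $2^n-1\equiv 2-1=1\pmod 3$, because $2^n\equiv 2\pmod 3$ for odd $n$. Hence $\gcd(3,2^n-1)=1$. The power map $x\mapsto x^3$ is then a bijection of $\mathbb{F}_{2^n}$. Indeed, it fixes $0$, and on the cyclic group $\mathbb{F}_{2^n}^*$ of order $2^n-1$ raising to a power coprime to the group order is an automorphism. So $g(x)=x^3$ satisfies the hypothesis of Corollary \ref{permutation_general}.

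\textbf{Identifying the function.} With $g(x)=x^3$ we have $a^3 g(x)^3 = a^3 x^9$. Therefore
$$g(x)+a^{-1}\mathrm{Tr}(a^3 g(x)^3)=x^3+a^{-1}\mathrm{Tr}(a^3x^9),$$
which is exactly the APN map in the statement. Corollary \ref{permutation_general} now says that this map is 2-to-1 and that its image is a $(2^{2k},2,2^{2k},2^{2k-1})$ relative difference set in $(\mathbb{F}_{2^n},+)$ relative to $N=\{a^{-1},0\}$. This is the claim.

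\textbf{Justifying the corollary and the APN property.} For completeness, I would recall why Corollary \ref{permutation_general} holds. Write $f(x)=x+a^{-1}\mathrm{Tr}(a^3x^3)$. Since $g$ is a bijection, $f\circ g$ has the same image set $D$ as $f$, and it has the same fibre sizes, so it is still 2-to-1. The difference-set count in Theorem \ref{main} depends only on $D$, so it transfers unchanged.

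The APN property of $x^3+a^{-1}\mathrm{Tr}(a^3x^9)$ is only descriptive here; it is not used in the relative difference set claim. I would cite it from the literature (Budaghyan–Carlet–Leander; it is mentioned in the introduction for $a=1$) rather than prove it.

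There is no real obstacle. The only point needing care is the coprimality $\gcd(3,2^n-1)=1$, which is exactly where the oddness of $n$ is used.
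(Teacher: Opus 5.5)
Your proposal is correct and matches the paper's proof: the paper also derives this theorem by specializing Corollary \ref{permutation_general} to the permutation $g(x)=x^3$, which is a bijection because $\gcd(3,2^n-1)=1$ for odd $n$. As in your proposal, the APN property is only cited from the literature and is not used in the relative difference set argument.
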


Note that the authors of \cite{Carlet} proved that the above mentioned map $x \to x^3+a^{-1}\mathrm{Tr}(a^3x^9)$  is indeed an APN map.
\medskip

Our results so far can be used to show another 2-to-1 APN function also gives rise to a relative difference set.
\medskip

\noindent {\bf Definition:} We call a function $l(x)$ over $\mathbb{F}_{2^n}$   a {\it linear function over $\mathbb{F}_2$} if $l(x-y)=l(x)-l(y)$ for any $x$, $y\in \mathbb{F}_{2^n}$.  
\medskip

We immediately obtain the following result:

\begin{corollary}

Let $n=2k+1$ be odd and $a \in \mathbb{F}_{2^n}$ non-zero,  $h(x)= x^3+a^{-1}\mathrm{Tr}(a^3x^9)$, and $l(x)$ is a linear function over $\mathbb{F}_{2}$. Then the image of the composite function  $(l\circ h)(x)$ is a {\rm (}$2^{2k}$,  $2$,  $2^{2k}$,  $2^{2k-1}${\rm )} relative difference set in $(\mathbb{F}_{2^{n}},+)$ relative to the subgroup $N=\{l(a^{-1}),0\}$.
\end{corollary}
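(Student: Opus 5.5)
The plan is to reduce the statement to Theorem~\ref{APN_IEEE} by applying a group automorphism, reading ``linear function over $\mathbb{F}_2$'' as an $\mathbb{F}_2$-linear \emph{permutation} of $\mathbb{F}_{2^n}$. This reading is needed; I would make it explicit in the statement. If $l$ has a nonzero kernel, then either $l(D)$ has fewer than $2^{2k}$ elements, or $\ker l=\{0,a^{-1}\}$. In the latter case $l(D)$ is the whole image of $l$ and the ``forbidden subgroup'' $\{l(a^{-1}),0\}$ collapses to $\{0\}$. Either way the conclusion fails, so I will assume $l$ is bijective.

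Let $D$ be the image of $h$. By Theorem~\ref{APN_IEEE}, $D$ is a $(2^{2k},2,2^{2k},2^{2k-1})$ relative difference set in $(\mathbb{F}_{2^n},+)$ relative to $N=\{a^{-1},0\}$. The image of $l\circ h$ is exactly $l(D)$. Since $l(x-y)=l(x)-l(y)$ and $l$ is a bijection, $l$ is an automorphism of the group $(\mathbb{F}_{2^n},+)$. Hence $|l(D)|=|D|=2^{2k}$, and $l(N)=\{l(a^{-1}),0\}$ is a subgroup of order $2$; normality is automatic since the group is abelian.

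The core step is transporting difference counts. For $g\in\mathbb{F}_{2^n}$, the map $(r,r')\mapsto(l(r),l(r'))$ is a bijection from pairs in $D\times D$ with $r-r'=g$ to pairs in $l(D)\times l(D)$ with difference $l(g)$, because $l(r)-l(r')=l(r-r')$ and $l$ is injective. Therefore the number of representations of $l(g)$ as a difference from $l(D)$ equals the number of representations of $g$ from $D$. As $g$ runs over $\mathbb{F}_{2^n}\setminus N$, the element $l(g)$ runs over $\mathbb{F}_{2^n}\setminus l(N)$, so each such element is covered $2^{2k-1}$ times. The single nonzero element $l(a^{-1})$ of $l(N)$ is covered $0$ times.

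This gives the claimed parameters. There is no real computational obstacle. The only delicate point is the bijectivity hypothesis on $l$, addressed in the first paragraph.
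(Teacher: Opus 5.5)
Your proof is correct and is the same argument the paper leaves implicit when it says the corollary follows ``immediately'' from Theorem~\ref{APN_IEEE}: the image of $l\circ h$ is $l(D)$, and an additive bijection carries difference counts from $D$ to $l(D)$, sending $N=\{a^{-1},0\}$ to $l(N)$. You are also right that $l$ must be a permutation, since for a merely $\mathbb{F}_2$-linear $l$ (e.g.\ $l=0$) the statement fails; the paper tacitly assumes this, as its later corollary (``linear permutation function'') and its note that $l(x)=x^2+\alpha x+\gamma\,\mathrm{Tr}(\beta x)$ is a permutation both show.
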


 The authors of \cite{Kolsch} showed the following family of  functions is also APN and  2-to-1:
 
 \begin{lemma}\label{APN_gamma}
 Let $\alpha$, $\beta$, $\gamma$ be non-zero elements in $\mathbb{F}_2^n$ with $n$ odd. Further, let $\gamma \notin \{x^2+\alpha x\,|\, x\ in \mathbb{F}_{2^n}\}$ and $Tr(\beta \alpha)=1$, then
 $$k(x)=x^6+\alpha x^3+\gamma Tr (\alpha^{-3}x^9+\beta x^3)$$
 is APN and 2-to-1.
 \end{lemma}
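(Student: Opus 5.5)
The plan is to verify the two properties directly, using only that $n$ is odd, so that $x\mapsto x^3$ is a permutation and $\mathrm{Tr}(1)=1$. Put $L(y)=y^2+\alpha y$. This is $\mathbb{F}_2$-linear with kernel $\{0,\alpha\}$, and by hypothesis $\gamma\notin L(\mathbb{F}_{2^n})$. The single trick used twice is this: whenever a difference of values of $k$ has the form $L(\cdot)+\gamma\epsilon$ with $\epsilon\in\{0,1\}$, the case $\epsilon=1$ cannot give $0$, since that would put $\gamma$ in the image of $L$.

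\emph{2-to-1.} Substitute $y=x^3$, which is a bijection. Then $k=L(y)+\gamma T(y)$ with $T(y)=\mathrm{Tr}(\alpha^{-3}y^3+\beta y)$. If $k(y)=k(y')$, then $L(y+y')=\gamma(T(y)+T(y'))$. By the trick the traces must agree, so $y'\in\{y,y+\alpha\}$. It remains to show $T(y+\alpha)=T(y)$ for every $y$. First, $\alpha^{-3}\bigl((y+\alpha)^3+y^3\bigr)=\alpha^{-2}y^2+\alpha^{-1}y+1$. Since $\mathrm{Tr}(\alpha^{-2}y^2)=\mathrm{Tr}(\alpha^{-1}y)$, its trace is $\mathrm{Tr}(1)=1$. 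The linear term contributes $\mathrm{Tr}(\beta\alpha)=1$, so the total change is $0$. Hence every fibre is exactly $\{y,y+\alpha\}$, and $k$ is 2-to-1.

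\emph{APN.} All exponents $6,3,9$ have binary weight $2$, so $k$ is quadratic. For $u\neq0$ the map $x\mapsto k(x+u)+k(x)+k(u)+k(0)$ is therefore $\mathbb{F}_2$-linear in $x$. APN is equivalent to this linear map having kernel exactly $\{0,u\}$. Writing $s=ux^2+u^2x$, the map equals
$$L(s)+\gamma B_u(x),\qquad B_u(x)=\mathrm{Tr}\bigl(\alpha^{-3}(ux^8+u^8x)+\beta s\bigr).$$
By the trick, a kernel element must satisfy $B_u(x)=0$ and $L(s)=0$, that is $s\in\{0,\alpha\}$.

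The case $s=0$ gives $x\in\{0,u\}$. The main step is to exclude $s=\alpha$ by showing $B_u(x)=1$ there. Set $w=x/u$ and $c=\alpha/u^3$, so that $w^2+w=c$ and hence $\mathrm{Tr}(c)=0$. Then:
\begin{itemize}
\item[(i)] $\mathrm{Tr}(\beta s)=\mathrm{Tr}(\beta\alpha)=1$.
\item[(ii)] $ux^8+u^8x=u^9(w^8+w)=u^9(c^4+c^2+c)$.
\item[(iii)] From (ii), $\mathrm{Tr}\bigl(\alpha^{-3}(ux^8+u^8x)\bigr)=\mathrm{Tr}\bigl(c^{-3}(c^4+c^2+c)\bigr)=\mathrm{Tr}(c)+\mathrm{Tr}(c^{-1})+\mathrm{Tr}(c^{-2})=\mathrm{Tr}(c)=0$.
\end{itemize}
Adding (i) and (iii) gives $B_u(x)=1$, so these $x$ are not in the kernel. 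The kernel is therefore $\{0,u\}$ and $k$ is APN.

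The step I expect to be most delicate is this last computation (ii)--(iii): reducing $x^8u+xu^8$ through $w^8+w=c^4+c^2+c$ and spotting the cancellation $\mathrm{Tr}(c^{-1})=\mathrm{Tr}(c^{-2})$.
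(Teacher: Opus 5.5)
Your proof is correct, but it reaches the lemma differently from the paper. The paper gives no proof of its own. It quotes the result from K\"olsch et al.\ and explains it through the factorization $k=l\circ h$, where:
\begin{itemize}
\item $l(x)=x^2+\alpha x+\gamma\mathrm{Tr}(\beta x)$ is an $\mathbb{F}_2$-linear permutation;
\item $h(x)=x^3+\alpha\mathrm{Tr}(\alpha^{-3}x^9)$ is the map of Theorem~\ref{APN_IEEE} with $a=\alpha^{-1}$, which is APN by the result cited after that theorem and 2-to-1 by Lemma~\ref{image_size} composed with the permutation $x\mapsto x^3$.
\end{itemize}
Post-composing with a linear permutation preserves both properties, so $k$ inherits them from $h$.

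You instead check everything directly. Your single ``trick'' is exactly the proof that $l$ is injective: $l(z)=0$ forces $L(z)=0$, so $z\in\{0,\alpha\}$, and $\mathrm{Tr}(\beta\alpha)=1$ rules out $\alpha$. Your two trace computations are in effect self-contained proofs that $h$ is 2-to-1 and APN:
\begin{itemize}
\item $\mathrm{Tr}\bigl(\alpha^{-3}((y+\alpha)^3+y^3)\bigr)=1$, which gives the 2-to-1 property;
\item $\mathrm{Tr}\bigl(\alpha^{-3}(ux^8+u^8x)\bigr)=0$ whenever $ux^2+u^2x=\alpha$, which gives APN.
\end{itemize}
So you have merged the three ingredients into one computation.

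Your route is independent of all cited results, including the APN property of $x^3+a^{-1}\mathrm{Tr}(a^3x^9)$ and Lemma~\ref{image_size}. The factorization instead supplies the structure the paper needs next: $k(\mathbb{F}_{2^n})=l\bigl(h(\mathbb{F}_{2^n})\bigr)$. This lets the relative difference set property transfer from $h$ to $k$, with forbidden subgroup $\{0,l(\alpha)\}=\{0,\gamma\}$. A direct verification of APN and 2-to-1 does not reveal this.
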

 
 As stated in  \cite{Kolsch}, the APN function from Lemma \ref{APN_gamma} is very closely related to the APN function in Theorem \ref{APN_IEEE}: \\
 
 Let $l(x)=x^2+\alpha x+\gamma Tr(\beta x)$ and $h(x)=x^3+\alpha\mathrm{Tr}(\alpha^{-3}x^9)$, where $\gamma \notin \{x^2+\alpha x\,|\, x\in \mathbb{F}_{2^n}\}$ and $Tr(\beta \alpha)=1$.  It is easy to see that $l(x)$ is a linear function over $\mathbb{F}_2$ and   $l(\alpha)=\alpha^2+\alpha^2+\gamma Tr(\beta \alpha)=\gamma$.
Also in \cite{Kolsch}, it was shown that $l(x)$ is a permutation function over $\mathbb{F}_{2^n}$. We compute
 \begin{eqnarray}
 \nonumber(l\circ h)(x)=l(h(x))&=&l(x^3+\alpha\mathrm{Tr}(\alpha^{-3}x^9))\\
\nonumber &=&l(x^3)+l(\alpha \mathrm{Tr}(\alpha^{-3} x^9)\\
\nonumber &=& x^6+\alpha x^3+\gamma Tr(\beta x^3)+\mathrm{Tr}(\alpha^{-3} x^9) l(\alpha)\\
 &=&x ^6+\alpha x^3+\gamma Tr(\beta x^3+\alpha^{-3} x^9) \label{explanation}
 \end{eqnarray}
 In Equation (\ref{explanation}), we used $l(\alpha)=\gamma$.   Based on Lemma \ref{APN_gamma}, we have the following result:
 
 \begin{theorem}
 Let $\alpha$, $\beta$, $\gamma$ be non-zero elements in $\mathbb{F}_2^n$ with $n$ odd. Further, let $\gamma \notin \{x^2+\alpha x\,|\, x\in \mathbb{F}_{2^n}\}$ and $Tr(\beta \alpha)=1$, then the image of the 2-to-1 APN function
 $$k(x)=x^6+\alpha x^3+\gamma Tr (\alpha^{-3}x^9+\beta x^3)$$
 is a {\rm (}$2^{2k}$,  $2$,  $2^{2k}$,  $2^{2k-1}${\rm )} relative difference set in $(\mathbb{F}_{2^{n}},+)$ relative to the subgroup $N=\{l(\alpha),0\}=\{\gamma, 0\}$.
\end{theorem}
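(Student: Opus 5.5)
The plan is to combine the factorization $k = l\circ h$ from Equation (\ref{explanation}) with Theorem \ref{APN_IEEE}, and then push the relative difference set property through the additive bijection $l$.

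\textbf{Step 1: apply Theorem \ref{APN_IEEE} to $h$.} Take $a=\alpha^{-1}$ there. Then $a^{-1}=\alpha$ and $a^3=\alpha^{-3}$, so $h(x)=x^3+\alpha\mathrm{Tr}(\alpha^{-3}x^9)$ is exactly the map of Theorem \ref{APN_IEEE}. Hence $D_h=h(\mathbb{F}_{2^n})$ is a $(2^{2k},2,2^{2k},2^{2k-1})$ relative difference set in $(\mathbb{F}_{2^n},+)$ relative to $N_h=\{\alpha,0\}$. By Equation (\ref{explanation}), $k(x)=l(h(x))$, so the image of $k$ is $l(D_h)$.

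\textbf{Step 2: show $l$ is an additive group automorphism.} The map $l(x)=x^2+\alpha x+\gamma\mathrm{Tr}(\beta x)$ is $\mathbb{F}_2$-linear, since squaring, scaling and the trace are all additive. The paper cites \cite{Kolsch} for $l$ being a permutation. One can also check this directly. Suppose $l(x)=0$ with $x\neq 0$. If $\mathrm{Tr}(\beta x)=0$, then $x^2+\alpha x=0$, so $x=\alpha$; but then $\mathrm{Tr}(\beta\alpha)=1$, a contradiction. If $\mathrm{Tr}(\beta x)=1$, then $\gamma=x^2+\alpha x$, contradicting the hypothesis on $\gamma$. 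So $\ker l=0$, and $l$ is an automorphism of $(\mathbb{F}_{2^n},+)$.

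\textbf{Step 3: transport the difference counts through $l$.} For $r,r'\in D_h$ we have $l(r)-l(r')=l(r-r')$, and $r\mapsto l(r)$ is a bijection from $D_h$ to $l(D_h)$. Therefore the number of representations of $g$ as a difference in $l(D_h)$ equals the number of representations of $l^{-1}(g)$ as a difference in $D_h$. Consequently:
\begin{itemize}
\item $l(D_h)$ still has $2^{2k}$ elements;
\item no nonzero element of $l(N_h)$ is a difference;
\item every element outside $l(N_h)$ is covered exactly $2^{2k-1}$ times.
\end{itemize}
Finally, $l(N_h)=\{l(\alpha),0\}=\{\gamma,0\}$, using the computation $l(\alpha)=\gamma$ made before the statement. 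This gives the claimed parameters and forbidden subgroup.

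The argument is essentially formal. The only points needing care are matching the parameter $a=\alpha^{-1}$ correctly in Step 1 and confirming that $l$ is bijective in Step 2. Bijectivity is what guarantees the image keeps its size $2^{2k}$ and that the difference multiplicities are preserved exactly. The 2-to-1 and APN properties of $k$ come from Lemma \ref{APN_gamma} and are not needed for the relative difference set conclusion itself.
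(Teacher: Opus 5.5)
Your proposal is correct and follows the paper's own route: factor $k=l\circ h$ with $h$ the map of Theorem~\ref{APN_IEEE} for $a=\alpha^{-1}$, then transport the relative difference set through the additive bijection $l$, using $l(\alpha)=\gamma$ to identify the forbidden subgroup. Your direct kernel argument for the bijectivity of $l$ replaces the paper's citation of \cite{Kolsch}, and it makes explicit the injectivity that the paper's corollary on linear $l$ tacitly requires.
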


Note that further variations on these functions will provide relative difference sets as the following result is obvious:

\begin{corollary}
Let $g(x)$ is a permutation function of $\mathbb{F}_{2^n}$ and $l(x)$ be a linear permutation function over $\mathbb{F}_{2^n}$. If the image of a function $f$ over $\mathbb{F}_{2^n}$ is a $(m, n, k, \lambda)$ relative difference set relative to the subgroup $N$, then the image of $(l\circ f\circ g)(x)=l(f(g))(x)$ is a $(m,n,k,\lambda)$ relative difference set relative to the subgroup of $l(N)$. 
\end{corollary}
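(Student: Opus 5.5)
The plan is to reduce everything to the two operations separately. Let $R=\mathrm{Im}(f)$, where $R$ is an $(m,n,k,\lambda)$ relative difference set in $(\mathbb{F}_{2^n},+)$ relative to $N$.

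\textbf{Step 1: precomposing with $g$.} Since $g$ is a permutation of $\mathbb{F}_{2^n}$, we have $\mathrm{Im}(f\circ g)=f(g(\mathbb{F}_{2^n}))=f(\mathbb{F}_{2^n})=R$. This is the same observation used for Corollary \ref{permutation_general}. So we may replace $f$ by $f\circ g$ without changing the image, and it remains to treat $l\circ f$.

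\textbf{Step 2: postcomposing with $l$.} Since $l$ is linear over $\mathbb{F}_2$ and bijective, it is an automorphism of the additive group $(\mathbb{F}_{2^n},+)$. Hence $\mathrm{Im}(l\circ f)=l(R)$, and $l(N)$ is a subgroup of the same order $n$. It is normal because the group is abelian. Also $|l(R)|=|R|=k$, because $l$ is injective.

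\textbf{Step 3: matching differences.} For $r,r'\in R$ with $r\neq r'$, linearity gives $l(r)-l(r')=l(r-r')$. Injectivity gives $l(r)\neq l(r')$, so the map $(r,r')\mapsto(l(r),l(r'))$ is a bijection between ordered pairs of distinct elements of $R$ and of $l(R)$. Therefore, for any $u\in\mathbb{F}_{2^n}$, the number of representations of $u$ as a difference in $l(R)$ equals the number of representations of $l^{-1}(u)$ as a difference in $R$. Because $l$ is a bijection, $u\in l(N)\setminus\{0\}$ if and only if $l^{-1}(u)\in N\setminus\{0\}$, and $u\notin l(N)$ if and only if $l^{-1}(u)\notin N$. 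So the elements of $l(N)\setminus\{0\}$ are never covered, and every element outside $l(N)$ is covered exactly $\lambda$ times. This proves that $l(R)$ is an $(m,n,k,\lambda)$ relative difference set relative to $l(N)$.

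\textbf{Main obstacle.} There is essentially no obstacle. The one point needing care is that the argument uses both linearity and injectivity of $l$. Without injectivity, the size of $l(R)$ and the order of $l(N)$ could shrink. This is why the corollary assumes $l$ is a linear \emph{permutation}, and why the earlier corollary about $l\circ h$ implicitly relies on $l$ being a permutation, as in the $k(x)$ application.
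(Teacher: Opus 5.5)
Your proof is correct and follows the reasoning the paper leaves implicit when it calls this result obvious: precomposing with a permutation leaves the image set unchanged (the remark before Corollary~\ref{permutation_general}), and an additive bijection $l$ is an automorphism of $(\mathbb{F}_{2^n},+)$ carrying $R$ to $l(R)$, $N$ to $l(N)$, and each difference $r-r'$ to $l(r)-l(r')$. Your side remark is also correct: the earlier corollary on $l\circ h$ tacitly needs $l$ to be injective, since for example $l=0$ gives image $\{0\}$.
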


\subsection{The APN map $f(x)=x^{2^k-1}+x^{2^k}$}

Next we turn our attention to the family of functions $f(x)=x^{2^k-1}+x^{2^k}$, $k\ge 2$.  They are 2-to-1 APN functions over $(\mathbb{F}_{2^{2k-1}},+)$ and we will show in what follows that their images are  relative difference sets in  $(\mathbb{F}_{2^{2k-1}},+)$.
\medskip

Below we cite a result from \cite{Garaschuk} (Theorem 2.2) stating some important properties of $f(x)=x^{2^k-1}+x^{2^k}$.

\begin{lemma}\label{G-L-Theorem}

Let $m>1$ and let $k$ be such that $\gcd(2^k-1, 2^m-1)=1$. Then for each $a \in \mathbb{F}_{2^m}$ we have $Tr(a^{1/(2^k-1)})=0$ if and only if $a=t^{2^k}+t^{2^k-1}$ for some $t \in \mathbb{F}_{2^m}$. Furthermore, the function $f(t)=t^{2^k}+t^{2^k-1}$ is a 2-to-1 function on $\mathbb{F}_{2^m}$.
\end{lemma}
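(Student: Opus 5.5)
We set $e=2^k-1$ and reduce the equation $t^{2^k}+t^{2^k-1}=a$ to a linear equation through a change of variables. Since $\gcd(e,2^m-1)=1$, the map $x\mapsto x^e$ is a permutation of $\mathbb{F}_{2^m}$. Hence $a^{1/e}$ is well defined, and we write $s=a^{1/e}$. Also $\gcd(2^k-1,2^m-1)=2^{\gcd(k,m)}-1$, so the hypothesis forces $\gcd(k,m)=1$. We will use $L(u)=u^{2^k}+u$, which is $\mathbb{F}_2$-linear on $\mathbb{F}_{2^m}$ with kernel $\{u:u^{2^k}=u\}=\mathbb{F}_{2^{\gcd(k,m)}}=\{0,1\}$. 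Therefore $L$ has image of size $2^{m-1}$. Since $\mathrm{Tr}(u^{2^k})=\mathrm{Tr}(u)$, this image lies in the trace-zero hyperplane, and by counting it equals that hyperplane. So $\mathrm{Tr}(s)=0$ if and only if $s=u^{2^k}+u$ for some $u$, and for each such $s$ there are exactly two such $u$.

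The key step is a bijection between nonzero roots $t$ of $f(t)=a$ and roots $u$ of $L(u)=s$, for $a\neq 0$ (equivalently $s\neq0$). Note that $f(t)=t^e(t+1)$.
\begin{itemize}
\item Given $u$ with $u^{2^k}+u=s$, we have $u\neq 0$. Put $t=s/u$. Then $t+1=(s+u)/u=u^{2^k}/u=u^e$, so $f(t)=(s/u)^e u^e=s^e=a$.
\item Conversely, let $f(t)=a\neq 0$. Then $t\notin\{0,1\}$, and there is a unique $u$ with $u^e=t+1$. Now $(tu)^e=t^e(t+1)=a=s^e$, and injectivity of $x\mapsto x^e$ gives $tu=s$. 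Hence $u^{2^k}=u\cdot u^e=u(t+1)=s+u$, i.e. $L(u)=s$.
\end{itemize}
These two constructions are mutually inverse ($t=s/u$ and $u=s/t$).

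Consequently, for $a\neq0$: $a$ lies in the image of $f$ if and only if $s=a^{1/e}$ lies in the image of $L$, if and only if $\mathrm{Tr}(a^{1/e})=0$. In that case $a$ has exactly two preimages, matching the two solutions $u$. For $a=0$, we have $\mathrm{Tr}(0)=0$, and $t^e(t+1)=0$ holds exactly for $t\in\{0,1\}$, again two preimages. This proves both the characterization and the 2-to-1 property.

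The only delicate point is finding the substitution $t=s/u$, i.e. noticing that $(s/t)^e=t+1$ linearizes. Once that is in hand, checking the bijection, and handling the excluded values $t=0,1$ and $a=0$ separately, is routine.
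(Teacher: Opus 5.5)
Your proof is correct and complete. The paper gives no argument of its own for this lemma: it quotes Theorem~2.2 of Garaschuk and Lison\v{e}k, and notes that the 2-to-1 claim is only implicit in their proof. Your write-up therefore supplies a self-contained proof, including the 2-to-1 part, where the paper has only a citation.

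Two small simplifications are available. First, the substitution behind your fiber bijection works globally. Take the permutation $\sigma(u)=u^e+1$ and $\pi(x)=x^e$. Then for every $u\in\mathbb{F}_{2^m}$,
\[
f(\sigma(u))=(u^e+1)^e\,u^e=\bigl((u^e+1)u\bigr)^e=\bigl(u^{2^k}+u\bigr)^e=\pi(L(u)),
\]
so $f=\pi\circ L\circ\sigma^{-1}$. This gives the image description and the 2-to-1 property at once. It also covers the fiber $f^{-1}(0)=\{0,1\}=\sigma(\ker L)$ without a separate case. On $L^{-1}(s)$ with $s\neq 0$, your map $u\mapsto s/u$ is exactly $\sigma$, since $s/u=u^{2^k-1}+1$.

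Second, $\ker L=\{0,1\}$ follows directly from the hypothesis. If $u^{2^k}=u$ with $u\neq 0$, then $u^e=1=1^e$, so $u=1$ by injectivity of $x\mapsto x^e$. You therefore do not need the identity $\gcd(2^k-1,2^m-1)=2^{\gcd(k,m)}-1$.
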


{\it Note:} The latter part of Theorem \ref{G-L-Theorem} does not appear explicitly in the statement of the Theorem 2.2 in \cite{Garaschuk}, but appears in their proof of the theorem. 

\medskip

We are now ready to show another family of 2-to-1 APN functions give rise to relative different ses.

\begin{theorem}\label{Special_family}
Let $f(x)=x^{2^k-1}+x^{2^k}$, $k\ge 2$. Then $f(x)$ is a 2-to-1 APN function in $\mathbb{F}_{2^{2k-1}}$, and its image is a $(2^{2k-2}, 2, 2^{2k-2}, 2^{2k-3})$ relative difference set in $(\mathbb{F}_{2^{2k-1}},+)$ relative to the subgroup $N=\{0, 1\}$
\end{theorem}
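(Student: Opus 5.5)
The plan is to reduce the relative difference set property to a character-sum condition, and then to a Walsh-type sum for a single power map. Throughout, put $m=2k-1$ and $G=(\mathbb{F}_{2^m},+)$.

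\textbf{Hypotheses of Lemma \ref{G-L-Theorem}.} We need $\gcd(2^k-1,2^m-1)=1$. This holds because $\gcd(2^k-1,2^{2k-1}-1)=2^{\gcd(k,2k-1)}-1=2^1-1=1$. Lemma \ref{G-L-Theorem} then shows that $f$ is 2-to-1. It also describes the image: with $e$ the inverse of $2^k-1$ modulo $2^m-1$,
$$D=f(\mathbb{F}_{2^m})=\{\,a\in\mathbb{F}_{2^m}: \mathrm{Tr}(a^{e})=0\,\}=\{\,u^{2^k-1}: \mathrm{Tr}(u)=0\,\},$$
so $|D|=2^{m-1}=2^{2k-2}$. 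The APN property of $f$ I would not reprove; I would quote it from \cite{Kolsch}, where this family is listed among the 2-to-1 APN maps. The real content is the difference property.

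\textbf{Character criterion.} I would prove the difference property with characters rather than the direct case split used for Theorem \ref{main}. In the group ring of $G$, the claim is
$$DD^{(-1)}=k'+\lambda(G-N),\qquad N=\{0,1\},\ k'=2^{m-1},\ \lambda=2^{m-2}.$$
Consider a nontrivial additive character $\chi_c(x)=(-1)^{\mathrm{Tr}(cx)}$, $c\neq0$. Since $\chi_c(N)=1+(-1)^{\mathrm{Tr}(c)}$, the identity is equivalent, by Fourier inversion, to two conditions:
$$|\chi_c(D)|^2=2^{m-1}\ \text{if } \mathrm{Tr}(c)=1,\qquad \chi_c(D)=0\ \text{if } \mathrm{Tr}(c)=0,\ c\neq0.$$
Using the description of $D$, I write $\chi_c(D)=\tfrac12\sum_{a}\bigl(1+(-1)^{\mathrm{Tr}(a^e)}\bigr)(-1)^{\mathrm{Tr}(ca)}$. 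The first half of this sum vanishes for $c\neq0$. Substituting $a=u^{2^k-1}$, which is a bijection of $\mathbb{F}_{2^m}$, gives
$$\chi_c(D)=\tfrac12 W(c),\qquad W(c)=\sum_{u\in\mathbb{F}_{2^m}}(-1)^{\mathrm{Tr}(u+cu^{2^k-1})}.$$
So it suffices to show two things: $W(c)=0$ when $\mathrm{Tr}(c)=0$ and $c\neq 0$, and $W(c)^2=2^{m+1}=2^{2k}$ when $\mathrm{Tr}(c)=1$. Equivalently, the Walsh coefficients at input $1$ of the components $u\mapsto\mathrm{Tr}(cu^{2^k-1})$ must be $0$ or $\pm2^{k}$, with the value fixed by $\mathrm{Tr}(c)$.

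\textbf{Evaluating $W(c)$ (main obstacle).} This is the step I expect to be hardest. My first attempt would square $W(c)$ and substitute $u\mapsto u+v$, so that
$$W(c)^2=\sum_{v}(-1)^{\mathrm{Tr}(v)}\sum_u(-1)^{\mathrm{Tr}\bigl(c[(u+v)^{2^k-1}+u^{2^k-1}]\bigr)}.$$
I would then try to show two things. First, the inner sum vanishes unless $v$ lies in a specific small set. Second, on that set the trace condition collapses to $\mathrm{Tr}(c)$, just as $\mathrm{Tr}(1)=1$ decided Case 1 of Theorem \ref{main}.

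To make this work I would rewrite $u^{2^k-1}=u^{2^k}/u$ and use $2^k\cdot 2^{k-1}=2^{m}\equiv 1$, i.e.\ the Frobenius $u\mapsto u^{2^k}$ has inverse $u\mapsto u^{2^{k-1}}$. This should turn the derivative into a linearized (Niho-type) expression. If this becomes messy, the fallback is to count solutions directly: for $b\notin N$, show that $f(x)+f(y)=b$ has exactly $2^m$ solutions $(x,y)$, and that it has none for $b=1$. Dividing by $4$ (each element of $D$ has two preimages) then gives $\lambda=2^{m-2}=2^{2k-3}$, matching the last step of the proof of Theorem \ref{main}.
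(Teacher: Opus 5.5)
Your reduction is correct. With $m=2k-1$, the identity $DD^{(-1)}=2^{m-1}+2^{m-2}(G-N)$ is equivalent to two conditions: $\chi_c(D)=0$ for $c\neq0$ with $\mathrm{Tr}(c)=0$, and $\chi_c(D)^2=2^{m-1}$ for $\mathrm{Tr}(c)=1$. Your formula $\chi_c(D)=\tfrac12W(c)$ is also right.

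\textbf{The gap.} Evaluating $W(c)$ is the whole content of the difference-set claim, and you leave it open. The route you sketch does not work.

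\begin{itemize}
\item Squaring only collapses when the derivative of the function inside the character is affine. Your substitution $a=u^{2^k-1}$ puts the degree-$k$ monomial $u^{2^k-1}$ inside the character.
\item Its derivative $u\mapsto(u+v)^{2^k-1}+u^{2^k-1}$ has algebraic degree $k-1\ge2$ once $k\ge3$. For $v=1$, for example, it is $\sum_{j=0}^{2^k-2}u^j$.
\item The inner sums are therefore autocorrelation values of $g_c(u)=\mathrm{Tr}(cu^{2^k-1})$, and they do not vanish off a two-element set. If they did, $g_c$ would be partially bent with a one-dimensional space of linear structures. That forces algebraic degree at most $k-1$, whereas $g_c$ has degree $k$.
\end{itemize}

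The ``Niho-type'' rewriting is never carried out. The fallback, counting solutions of $f(x)+f(y)=b$, just restates the goal.

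\textbf{The missing idea} is to compute the inverse exponent explicitly. Since $(2^k-1)(2^k+1)=2^{2k}-1=2(2^m-1)+1$, we get $e=2^k+1$ and $D=\{a:\mathrm{Tr}(a^{2^k+1})=0\}$.

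Then skip your substitution, or undo it with $u=w^{2^k+1}$. This gives $W(c)=\sum_w(-1)^{Q(w)+\mathrm{Tr}(cw)}$ for the Gold quadratic $Q(w)=\mathrm{Tr}(w^{2^k+1})$.

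Raising to the power $2^{k-1}$ and using $2^{2k-1}=2^m$ gives $\mathrm{Tr}(w^{2^k}v)=\mathrm{Tr}(wv^{2^{k-1}})$. Hence $Q(w+v)+Q(w)+Q(v)=\mathrm{Tr}\bigl(w(v^{2^{k-1}}+v^{2^k})\bigr)$, and your squaring step now gives
\[
W(c)^2=\sum_{v}(-1)^{Q(v)+\mathrm{Tr}(cv)}\sum_{w}(-1)^{\mathrm{Tr}(w(v^{2^{k-1}}+v^{2^k}))}=2^m\bigl(1+(-1)^{Q(1)+\mathrm{Tr}(c)}\bigr)=2^m\bigl(1-(-1)^{\mathrm{Tr}(c)}\bigr).
\]
This uses two facts: $v^{2^{k-1}}=v^{2^k}$ if and only if $v\in N$, and $Q(1)=\mathrm{Tr}(1)=1$ because $m$ is odd. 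The result is exactly what you need. Your instinct about ``a small set, then collapse to $\mathrm{Tr}(c)$'' was right, but it applies to $Q$, not to $u^{2^k-1}$.

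\textbf{Comparison with the paper.} With this fix, your argument is a valid alternative to the paper's proof. The paper uses the same description of $D$ and the same linear form $b^{2^{k-1}}+b^{2^k}$, but counts $|D\cap(D+b)|$ directly. The hyperplane $U_b$ and its complement are invariant under $u\mapsto u+1$, which swaps $D$ with its complement, so each of them contains $2^{2k-3}$ elements of $D$. Your version packages the same two facts as a single statement: $Q$ is semi-bent, its radical is exactly the forbidden subgroup $N$, and $Q$ is nonzero on it. That is more conceptual; the paper's count is more elementary.

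\textbf{APN part.} This needs no citation. Since $\gcd(k,2k-1)=1$, $x^{2^k+1}$ is a Gold APN permutation, and $x^{2^k-1}$ is its compositional inverse. Inverses of APN permutations are APN. Adding the linear map $x^{2^k}$ shifts every derivative by a constant, so it preserves differential uniformity.
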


\proof Let $D$ be the image set of $f(x)$. That is,
$$D=\{x^{2^k-1}+x^{2^k}, x \in \mathbb{F}_{2^{2k-1}}\}.$$

Since $(2^k+1)(2^k-1)-2(2^{2k-1}-1)=1$, it follows that $\gcd(2^k-1, 2^{2k-1}-1)=1$,  and $(2^k+1)(2^k-1)\equiv 1 \pmod {2^{2k-1}-1}$, thus $x^{2^k-1}$ is CCZ-equivalent to the Gold function $x^{2^k+1}$ (an APN function since $\gcd(k, 2k-1)=1$, thus $x^{2^k-1}$ is an APN function. Since $x^{2^k}$ is a linear function, $x^{2^k-1}+x^{2^k}$ is EA equivalent to $x^{2^k-1}$, thus $x^{2^k-1}+x^{2^k}$ is an APN function.  By Lemma \ref{G-L-Theorem}, the function $f(x)$ is a 2-to-1 function, thus $|D|= 2^{2k-2}$.
\medskip

Let $(x^{2^k-1}+x^{2^k})+(y^{2^k-1}+y^{2^k})=b$, and let $u=x^{2^k-1}+x^{2^k}$, and $v=y^{2^k-1}+y^{2^k}$. Clearly
$(u^{2^k+1})^{2^k-1}=u$, thus $u^{1/(2^k-1)}=u^{2^k+1}$. (Note that every element of $\mathbb{F}_{2^{2k-1}}$ has a unique $(2^k-1)$-th root, so this is well-defined.) By  Lemma \ref{G-L-Theorem},  the equations $u=x^{2^k-1}+x^{2^k}$, and $v=y^{2^k-1}+y^{2^k}$ are solvable if and only if 
 $Tr(u^{1/(2^k-1)})$=$Tr(u^{2^k+1})=0$ and $Tr(v^{1/(2^k-1)})$=$Tr(v^{2^k+1})=0$. 
 
 \smallskip

 We first handle the case $b=1$. 
  If $u+v=1$, it follows that $v=1+u$, and 
\begin{eqnarray}
Tr(v^{2^k+1})&=&Tr((1+u)^{2^k+1})=Tr((1+u)^{2^k}(1+u))\nonumber\\
                           &=&Tr((1+u^{2^k})(1+u)=Tr(1+u+u^{2^k}+u^{2^k+1})\nonumber\\
                           &=&Tr(1)+Tr(u+u^{2^k})+Tr(u^{2^k+1})=1\nonumber                     
\end{eqnarray}

contradicting with $Tr(v^{2^k+1})=0$. Here we used that $Tr(u+u^{2^k})=0$, $Tr(u^{2^k+1})=0$, and $Tr(1)=1$ in $\mathbb{F}_{2^{2k-1}}$.

Thus the identity element 1 cannot be written as a difference of $u$, $v$, where $u$, $v$ are from the image set $D$.

\bigskip

Next let $b$ be a nonzero element in $\mathbb{F}_{2^{2k-1}}$, and $b \neq 1$. Assume $u+v=b$ and $Tr(u^{2^k+1})=Tr(v^{2^k+1})=0$. We will count the number of pairs  $(u, \, v)$  satisfying those conditions.
 First we will rewrite $v$ as $u+b$. We see that
\begin{eqnarray}\label{trace_combination}
Tr(v^{2^k+1})=Tr((u+b)^{2^k+1})=Tr(u^{2^k+1}+u^{2^k}b+ub^{2^k}+b^{2^k+1})=0.
\end{eqnarray}
Combining the condition $Tr(u^{2^k+1})=0$ and the identity 
$Tr(u^{2^k} b)$=$Tr(   (u^{2^k}b)^{2^{k-1}}   )$\
=$Tr(ub^{2^{k-1}})$, Equation (\ref{trace_combination}) becomes
\begin{eqnarray}\label{trace_simple_form}
Tr((b^{2^{k-1}}+b^{2^k})u)+Tr(b^{2^k+1})=0
\end{eqnarray}

\smallskip
Let $U_b=\{u \,|\, Tr((b^{2^{k-1}}+b^{2^k})u)=0\}$. Clearly $U_b$ is a hyperplane  passing through $1$ if $b\neq 0, 1$. Let $T=\{u\,|\, Tr(u^{2^k+1})=0\}$ and  $i_b=|T \cap U_b|$. Since $gcd(2^{2k-1}-1,2^k+1)=1$, we can see that $u^{2^k+1}$ is a permutation function in $\mathbb{F}_{2^{2k-1}}$, thus  $|T|=2^{2k-2}$. Next we will discuss the number of solutions to the system of equations $u+v=b$ and $Tr(u^{2^k+1})=Tr(v^{2^k+1})=0$ in two cases.

\begin{itemize}
\item[(i)]
We first consider the case when $Tr(b^{2^k+1})=0$, that is , when $Tr((b^{2^{k-1}}+b^{2^k})u)=0$ by using Equation (\ref{trace_simple_form}).

  Since $U_b$ is a hyperplane and $1 \in U_b$, it is easy to see that  $u \in U_b$ if and only if $u+1 \in U_b$. On the other hand, one and only one of $u$ and $u+1$ must be in  $T$ since $Tr((u+1)^{2^k+1})$=$Tr(u^{2^k+1}+u^{2^k}+u+1)$
  =$Tr(u^{2^k+1})+Tr(1)$=$Tr(u^{2^k+1})+1$.  Since $|U_b|=2^{2k-2}$, and exactly half of the elements of $U_b$  are in $T$, we see that
$i_b = |U_b|/2=2^{2k-3}$ for all $b \neq 0, 1$. Note that $i_b$ is also exactly half of the size of $T$. That is, there are exactly $2^{2k-3}$ pairs of  $(u, v)$ such that  $Tr(u^{2^k+1})=Tr(v^{2^k+1})=0$, and $Tr((b^{2^{k-1}}+b^{2^k})u)=0$.

\item[(ii)] Next we will consider the case when $Tr(b^{2^k+1})=1$, that is, when $Tr((b^{2^{k-1}}+b^{2^k})u)=1$. As shown in Case (i), exactly half of the elements of $U_b$  are in $T$.  Since the set of the solutions of $Tr((b^{2^{k-1}}+b^{2^k})u)=1$ is  the complement of the hyperplane $U_b$ with the same size $2^{2k-2}$, it follows that half of the elements of $T$ are from $U_b$, and half of the elements of $T$ are from the complement of $U_b$.  Thus there are also exactly $2^{2k-3}$ pairs of  $(u, v)$ such that  $Tr(u^{2^k+1})=Tr(v^{2^k+1})=0$, and $Tr((b^{2^{k-1}}+b^{2^k})u)=1$.

Hence in both cases, we have shown that there are exactly $2^{2k-3}$ pairs of  $(u, v)$ such that  $u+v=b$, where $u=x^{2^k-1}+x^{2^k}$, and $v=y^{2^k-1}+y^{2^k}$, and $b\neq 0, 1$. This completes the proof.

\end{itemize} \qed

\bigskip

 {\bf Note:} It is known that $f(x)=x^3+x^4$ is a 2-to-1 APN function in $\mathbb{F}_{2^n}$ for all odd $n$ is odd. However, computations show that the image set of $f(x)$ is not always a relative difference set. Hence not every 2-to-1 APN function gives rise to a relative difference set!

\section{On bent functions derived from the image of some APN functions}\label{Bent}

In this short section we briefly explore how a result of A. Pott relates our results on APN functions to bent Boolean functions. For completeness we provide one of several equivalent definitions of a bent function below (see for example Section 6.1 of \cite{Carlet_book}):
\medskip

{\bf Definition:}  Let $n$ be even. An $n$-variable Boolean function $f: \mathbb{F}_{2^n} \to \mathbb{F}_2$ is {\it bent} if and only if its Hamming distance to any affine function equals $2^{n-1} \pm 2^{\frac{n}{2}-1}$.
\medskip

Extended Affine equivalence for bent functions is defined in a similar way as for APN functions. Bent functions are of great interest because they play an important role in cryptography and coding theory, see for example \cite{Carlet_book}. In 2004, A. Pott \cite{Pott} proved the following theorem:

\begin{theorem}\label{Pott}
Let $K$ and $N$ be arbitrary finite groups and $f:\; K\to N$, the set 
$$R:=\{(g, f(g)):\; g \in K\} \subset K \times N$$
is a semiregular splitting $(|K|, |N|, |K|, |K|/|N|)$-difference set in $K\times N$ relative to $\{1\}\times N$ if and only if $f$ is perfect nonlinear.
\end{theorem}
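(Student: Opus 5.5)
We prove Pott's theorem directly from the definitions, with both sides reduced to one counting statement. Write the group operations multiplicatively and put $G=K\times N$, $H=\{1\}\times N$. Here ``perfect nonlinear'' means that for every $a\in K\setminus\{1\}$ the map $g\mapsto f(ag)f(g)^{-1}$ (equivalently, the derivative in the chosen convention) takes each value of $N$ exactly $|K|/|N|$ times. The plan is to compute, for an arbitrary $(a,c)\in G$, the number of ordered pairs of elements of $R$ whose quotient is $(a,c)$, and then compare this number with the relative difference set conditions.

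\textbf{Step 1: counting quotients.} A quotient of two elements of $R$ has the form
$$(g,f(g))(h,f(h))^{-1}=(gh^{-1},\,f(g)f(h)^{-1}).$$
Set $a=gh^{-1}$. Because the first coordinate determines $g$ once $h$ is chosen, the pairs $(g,h)$ with $gh^{-1}=a$ are in bijection with $h\in K$ via $g=ah$. Hence the number of representations of $(a,c)$ is
$$\lambda(a,c)=|\{h\in K:\ f(ah)f(h)^{-1}=c\}|.$$
If $a=1$, the second coordinate is $f(h)f(h)^{-1}=1$. So no nontrivial element of $H$ is ever a quotient of two distinct elements of $R$, and distinct elements of $R$ have distinct quotients only off $H$. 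Note also that $|R|=|K|$, and $R$ meets each coset $(g,\cdot)H$ in exactly one element. This gives the ``semiregular'' and ``splitting'' properties: $H$ has the complement $K\times\{1\}$, and the parameters $(m,n,k)=(|K|,|N|,|K|)$ with $k=m$ are the semiregular case.

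\textbf{Step 2: the equivalence.} By the definition of a relative difference set, $R$ is a $(|K|,|N|,|K|,\lambda)$-RDS relative to $H$ exactly when $\lambda(a,c)=\lambda$ for all $a\neq 1$ and all $c\in N$. Summing over $c$ gives $\sum_c\lambda(a,c)=|K|$, which forces $\lambda=|K|/|N|$. Therefore the RDS condition is equivalent to the statement that every derivative $h\mapsto f(ah)f(h)^{-1}$ with $a\ne1$ is balanced, and that is precisely perfect nonlinearity. Both implications follow from this single reformulation.

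\textbf{Main obstacle.} The computation itself is routine. The delicate part is the bookkeeping of left versus right quotients in nonabelian $K$ and $N$: the quotient convention used for the difference set must match the convention used in defining the derivative. If $R$ is defined via $r^{-1}r'$ instead, the same argument works with $f(h)^{-1}f(ha)$. It should also be checked that the meaning of ``splitting'' (that $H$ has a complement in $G$) is exactly what Pott intends.
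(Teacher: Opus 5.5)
Your proof is correct. The count $\lambda(a,c)=|\{h\in K: f(ah)f(h)^{-1}=c\}|$ does the work, together with three observations:
\begin{itemize}
\item $a=1$ produces only the identity;
\item $R$ is a transversal of the normal subgroup $H=\{1\}\times N$, which has the complement $K\times\{1\}$;
\item the sum $\sum_{c}\lambda(a,c)=|K|$ forces $\lambda=|K|/|N|$.
\end{itemize}
These give both directions at once. The paper does not prove this theorem; it quotes it from Pott. Your direct count is the standard argument for it.

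The left/right convention issue you raise turns out to be harmless. Since $H$ is normal, the group ring elements $G$ and $H$ are central. So for each irreducible unitary representation $\rho$, the relation $RR^{(-1)}=|K|+\lambda(G-H)$ gives $\rho(R)\rho(R)^{*}=cI$ for some scalar $c$. This forces $\rho(R)^{*}\rho(R)=cI$: if $c\neq 0$ then $\rho(R)$ is invertible, and if $c=0$ then $\rho(R)=0$. Hence $R^{(-1)}R=RR^{(-1)}$, and the derivatives $f(ah)f(h)^{-1}$ and $f(h)^{-1}f(ha)$ define the same class of perfect nonlinear functions.

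A minor wording fix: ``distinct elements of $R$ have distinct quotients only off $H$'' should say that quotients of distinct elements of $R$ lie off $H$.
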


Whenever $K=\mathbb{F}_{2^n}$ and $N=\mathbb{F}_2$ in the previous theorem this means the function $f$ is bent. This case corresponds exactly to the relative difference sets we discussed in Section 2 of this paper.
\medskip

In Theorem \ref{main}, we showed that the image $D$ of the APN function $h(x)=x+a^{-1}\mathrm{Tr}(a^3x^3)$ is a ($2^{2k}$, 2, $2^{2k}$, $2^{2k-1}$)-relative difference set in $(\mathbb{F}_{2^{2k+1}}, +)$ relative to the subgroup $N=\{a^{-1},0\}$, which is a semiregular splitting difference set satisfying the condition in the above result. In the next theorem we describe the bent function arising from it (which, unfortunately, turns out to be known).

\begin{theorem}
Let $n=2k+1$. The bent function arising from the APN function $h(x)=x+a^{-1}\mathrm{Tr}(a^3x^3)$ over $\mathbb{F}_{2^n}$  through Theorem \ref{APN_IEEE} and Theorem \ref{Pott} is affine equivalent to the bent function $x_1x_2\oplus x_3x_4 \oplus \cdots \oplus x_{n-2}x_{n-1}\oplus \epsilon $, $\epsilon \in \mathbb{F}_2$.
\end{theorem}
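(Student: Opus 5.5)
The plan is to make the construction behind Theorem \ref{Pott} explicit for the set $D$ of Theorem \ref{main}. Then I identify the resulting Boolean function as a quadratic form on a $2k$-dimensional $\mathbb{F}_2$-space and invoke Dickson's classification of quadratic forms.

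First I fix a complement $K$ of the forbidden subgroup $N=\{0,a^{-1}\}$ in $(\mathbb{F}_{2^n},+)$. I take $K=\{u:\mathrm{Tr}(au)=0\}$. Since $\mathrm{Tr}(a\cdot a^{-1})=\mathrm{Tr}(1)=1$ for $n$ odd, $a^{-1}\notin K$ and $\mathbb{F}_{2^n}=K\oplus N$. Every $y$ then decomposes uniquely as $y=g+\varphi\, a^{-1}$ with $g\in K$ and $\varphi=\mathrm{Tr}(ay)\in\mathbb{F}_2$. Because $D$ is a $(2^{2k},2,2^{2k},2^{2k-1})$ relative difference set, it meets every coset of $N$ exactly once. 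Hence $D=\{g+\varphi(g)a^{-1}: g\in K\}$ for a function $\varphi:K\to\mathbb{F}_2$, and Theorem \ref{Pott} says $\varphi$ is bent on $K\cong\mathbb{F}_2^{2k}$. This $\varphi$ is "the bent function arising from $h$", and the task is to compute it.

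Next I compute $\varphi$ explicitly. Substituting $z=ax$, an element of $D$ is $y=a^{-1}(z+\mathrm{Tr}(z^3))$. Then
\[
\varphi=\mathrm{Tr}(ay)=\mathrm{Tr}(z)+\mathrm{Tr}(z^3)
\]
(using $\mathrm{Tr}(1)=1$), and
\[
g=y+\varphi a^{-1}=a^{-1}\bigl(z+\mathrm{Tr}(z)\bigr).
\]
Set $w=z+\mathrm{Tr}(z)$. Then $w$ runs over $H=\{w:\mathrm{Tr}(w)=0\}$, with $z$ and $z+1$ giving the same $w$. Writing $z=w$ or $z=w+1$, one checks that $\mathrm{Tr}(z+z^3)=\mathrm{Tr}(w^3)$ for $w\in H$. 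So, after the linear isomorphism $H\to K$, $w\mapsto a^{-1}w$, the bent function is $\psi(w)=\mathrm{Tr}(w^3)$ on the hyperplane $H$.

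Finally, $\psi$ is a quadratic form on the $2k$-dimensional $\mathbb{F}_2$-space $H$, because $\mathrm{Tr}(w^3)=\mathrm{Tr}(w^{2}\cdot w)$ has algebraic degree $2$. Its bentness follows from Theorem \ref{Pott} applied to Theorem \ref{main}. As a sanity check one can also verify directly that the associated bilinear form $B(w,v)=\mathrm{Tr}(w^2v+wv^2)$ is nondegenerate on $H$. Dickson's theorem says a nondegenerate (bent) quadratic Boolean function in $2k$ variables is affine equivalent to $x_1x_2\oplus\cdots\oplus x_{2k-1}x_{2k}\oplus\epsilon$, which with $2k=n-1$ is the claimed form. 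The only delicate point is the bookkeeping in the previous step: choosing the complement $K$ so that the traces collapse, and checking that $\varphi$ is well defined, i.e.\ that $z$ and $z+1$ give the same value. Everything else is standard.
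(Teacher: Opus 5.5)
Your proposal is correct and follows essentially the same route as the paper. The paper also writes $D$ as the graph of a Boolean function over a complement of $N=\{0,a^{-1}\}$, using $h(x)=h(x+a^{-1})$ to restrict to that complement. It identifies that function as the quadratic form $\mathrm{Tr}(a^3x^3)$, takes bentness from Theorem \ref{Pott}, and concludes with the classification of quadratic bent functions; your choice $K=\{u:\mathrm{Tr}(au)=0\}$ (giving $\mathrm{Tr}(w^3)$ on $\ker\mathrm{Tr}$) and your direct nondegeneracy check are only cosmetic refinements of that argument.
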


\proof Let $\{a^{-1}, \omega_1, \omega_2, \cdots, \omega_{n-1}\}$ be a basis of the vector space $\mathbb{F}_{2^n}$ over $\mathbb{F}_2$. Then we can write $x$ as 
$x=x_0 a^{-1}+x_1 \omega_1+x_2\omega_2+\cdots+x_{n-1}\omega_{n-1}$ where $x_i \in \mathbb{F}_2$, and
 $$
\{h(x):x\in\mathbb{F}_{2^n}\}= \{F_h(x_1,\dots,x_{n-1}),x_1,x_2,\dots,x_{n-1}):x_1,\dots,x_{n-1}\in \mathbb{F}_2\}
$$
 
The function  $F_h$ defined by $h$  is
$$
F_h(x_1,\dots,x_n)=x_0+\text{Tr}(a^3 x^3)$$
%((\sum_{i=0}^{n}x_i\omega^i)^3)=\text{Tr}((\sum_{i=1}^{n}x_i\omega^i)^3)

  Note however that  $F_h$ does not depend on the value of $x_0$, because $h(x)=h(y)$ if and only if $x=y$ or $x+y=a^{-1}$. As a result from now on we only need to consider those $x$ where $x_0=0$, that is,  $x=x_1 \omega_1+x_2\omega_2+\cdots+x_{n-1}\omega_{n-1}$,  $x_i \in \mathbb{F}_2$.\\

Since the image $D$ of $h$ is  a ($2^{2k}$, 2, $2^{2k}$, $2^{2k-1}$)-relative difference set in $(\mathbb{F}_{2^{2k+1}}, +)$ relative to the subgroup $N=\{a^{-1},0\}$, by Theorem \ref{Pott}, the function $F_h$ defined by $h$ is a bent function. Next we want to show that $F_h(x_1, x_2, \cdots, x_{n-1})$ is quadratic function in terms of $x_1$, $x_2$, $\cdots$, $x_{n-1}$.\\

Since $x=x_1 \omega_1+x_2\omega_2+\cdots+x_{n-1}\omega_{n-1}$, we have $ax=x_1 (a \omega_1)+x_2 (a \omega_2)+\cdots+x_{n-1}(a \omega_{n-1})$. For simplicity of writing, we denote $b_i=a \omega_i$, $i=1, 2, \cdots, n-1$, so $ax=x_1 b_1+x_2 b_2+\cdots+x_{n-1} b_{n-1}.$ We compute, using $x_i^2=x_i$ as $x_i\in\mathbb{F}_2$,

\begin{eqnarray}
&&\mathbb{F}_h(x_1,x_2,\cdots, x_{n-1})=x_0+\text{Tr}(a^3 x^3)=\text{Tr}(a^3 x^3) \nonumber \\
&&=\text{Tr}((x_1 b_1+x_2 b_2+\cdots x_{n-1}b_{n-1})^3)\nonumber \\ 
&&=\text{Tr}\left((x_1 b_1+x_2 b_2+\cdots x_{n-1}b_{n-1})^2 (x_1 b_1+x_2 b_2+\cdots x_{n-1}b_{n-1})\right)\nonumber \\ 
&&=\text{Tr}\left((x_1 b_1^2+x_2 b_2^2+\cdots x_{n-1}b_{n-1}^2)(x_1 b_1+x_2 b_2+\cdots x_{n-1}b_{n-1})\right) \nonumber\\  
&&=\text{Tr}(\sum_{i=1}^{n-1}\sum_{j=1}^{n-1} b_i^2 b_j x_i x_j)=\sum_{i,j}a_{ij}x_i x_j \text{\hspace{1cm} where $a_{ij}=\text{Tr}(b_i^2 b_j)$. }\nonumber
\end{eqnarray}

Hence we have shown that $F_h(x)$ is a Boolean quadratic bent function. By \cite{Carlet} all Boolean quadratic bent function are known and affine equivalent to the function $x_1x_2\oplus x_3x_4 \oplus \cdots \oplus x_{n-1}x_n\oplus \epsilon $,  $\epsilon \in \mathbb{F}_2$. This concludes the proof. \qed

\bigskip

In the case of the APN function $x^{2^k-1}+x^{2^k}$ discussed in Section 2.2 we were currently unable to determine the corresponding bent function. Computations in small cases however seem to indicate here as well the corresponding bent function is quadratic.

\section{Open problems}

We conclude the paper with some natural open problems.

Given that clearly not every 2-to-1 APN function gives rise to a relative difference set it is natural to ask the following:

\begin{itemize}
\item Which APN functions do give rise to relative difference sets? What characterizes these APN functions?
\item Is it true that every APN function is EA or CCZ equivalent to an APN function that gives rise to a relative difference set?
\end{itemize}

Further, given the natural correspondence with bent functions as described in the previous section, it is natural to ask

\begin{itemize}
\item Does every bent function arise from some 2-to-1 APN function through the connection described in Section 3? And if not, which do?
\end{itemize}

\section*{Acknowledgement}

The author would like to thank Yue Zhou for computational work done during the 7th Irsee conference that provided early evidence for some of the results from this paper, as well as for providing the blueprint for the proof in Section \ref{Bent}.

\end{document}